\theoremstyle{plain}
\newtheorem{lemma}{Lemma}                    % L:
\newtheorem{theorem}[lemma]{Theorem}         % T:
\newtheorem{corollary}[lemma]{Corollary}     % C:
\newtheorem{problem}[lemma]{Problem}         % B:
\newtheorem{observation}[lemma]{Observation} % O:
\theoremstyle{definition}
\newtheorem{example}[lemma]{Example}         % E:
\newcommand{\ggcqedsymbol}{$\square$}
\newcommand{\ggcqed}{\hbox{}\nobreak\hbox{\quad\ggcqedsymbol}}
\newcommand{\ggcendpf}{\ggcqed}
\newcommand{\ggcnopf}{\ggcqed}
\newcommand{\ggcendexample}{\ggcqed}
\newcommand{\defterm}[1]{\emph{#1}} % Defined term
\newcommand{\abstdefterm}[1]{#1} % Defined term in abstract
\newcommand{\ggcsetminus}{-}
\newcommand{\dom}{\ensuremath{\gamma}}
\newcommand{\domf}{\ensuremath{\gamma_f}}
\newcommand{\domg}{\ensuremath{\gamma_g}}
\date{January 17, 2017}
\title{Approximations of the Domination Number of a Graph}
\author{Glenn G.~Chappell\\
\small Department of Computer Science\\
\small University of Alaska\\
\small Fairbanks, AK 99775-6670\\
\small\texttt{chappellg{@}member.ams.org} \and
John Gimbel\\
\small Department of Mathematics and Statistics\\
\small University of Alaska\\
\small Fairbanks, AK 99775-6660\\
\small\texttt{jggimbel{@}alaska.edu} \and
Chris Hartman\\
\small Department of Computer Science\\
\small University of Alaska\\
\small Fairbanks, AK 99775-6670\\
\small\texttt{cmhartman{@}alaska.edu}}
\begin{document}

\maketitle
\centerline{\small \textit{2010 Mathematics Subject Classification.}
  05C69 (primary), 05C80 (secondary).}
\centerline{\small \textit{Key words and phrases.}
  domination, fractional domination, greedy domination, random graphs.}
\bigskip

\begin{abstract}
Given a graph $G$,
the \abstdefterm{domination number} $\dom(G)$ of $G$
is the minimum order of a set $S$ of vertices such
that each vertex not in $S$ is adjacent to some vertex in $S$.
Equivalently, label the vertices from $\{0,1\}$ so
that the sum over each closed neighborhood is at least one;
the minimum value of the sum of all labels, with this restriction,
is the domination number.
The \abstdefterm{fractional domination number} $\domf(G)$
is defined in the same way,
except that the vertex labels are chosen from $[0,1]$.
Given an ordering of the vertex set of $G$,
let $\domg(G)$ be the approximation of the
domination number by the standard greedy algorithm.
Computing the domination number is NP-complete;
however, we can bound $\dom$
by these two more easily computed parameters:
\[
\domf(G) \le \dom(G) \le \domg(G).
\]
How good are these approximations?
Using techniques from the theory of hypergraphs, one can show that,
for every graph $G$ of order $n$,
\[
\frac{\domg(G)}{\domf(G)} = O(\log n).
\]
On the other hand, we provide examples of graphs for which
$\dom / \domf = \Theta(\log n)$
and graphs for which
$\domg / \dom = \Theta(\log n)$.
Lastly, we use our examples to compare two bounds on $\domg$.
\end{abstract}

Graphs will be finite, simple, and undirected.
For a graph $G$,
we denote by $\delta(G)$ and $\Delta(G)$ 
the minimum and maximum degree of $G$, respectively.
We use $N[v]$ to denote the closed neighborhood of a vertex $v$.
The closed neighborhood of a sequence of vertices,
e.g., $N[v_1, v_2, \ldots, v_k]$,
is the union of the closed neighborhoods
of the vertices in the sequence.
We say that vertex $v$ \defterm{dominates} vertex $u$
if $u$ lies in the closed neighborhood of $v$.
See Haynes, Hedetniemi, \& Slater~\cite{HHS1998}
for definitions of graph-theoretic terms
and an introduction to domination in graphs.

If we assign weights to the vertices of a graph,
then the \defterm{total weight} of a set of vertices
is the sum of the weights of the vertices in the set.
We may consider a dominating set
as a $0,1$-weighting of the vertex set
so that the total weight of each closed neighborhood
is at least one.
Relaxing the requirement that the weights be integers,
we obtain a fractional version of the domination number.
Suppose we assign weight $f(v) \in [0,1]$ to each vertex $v$.
The function $f\colon V(G) \to [0,1]$
is a \defterm{fractional domination}
if for each vertex $v$,
\[
\sum_{u \in N[v]} f(u) \ge 1.
\]
The \defterm{fractional domination number} $\domf(G)$ of $G$ is
the minimum total weight of the vertex set,
taken over all fractional dominations of $G$.

A useful bound is the following,
which was discovered independently
by Grinstead \& Slater~\cite[Theorem~1]{GrSl1990}
and
by Domke, Hedetniemi, \& Laskar~\cite[Observation~3]{DHL1988}
(Observation~3 in the latter paper is slightly misstated,
with the inequalities in the wrong direction,
but the proof is correct).

\begin{lemma} \label{L:fracbounds}
For a graph $G$ of order $n$,
\[
\frac{n}{1+\Delta(G)} \le \domf(G) \le \frac{n}{1+\delta(G)}.\ggcnopf
\]
\end{lemma}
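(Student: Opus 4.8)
The plan is to prove the two inequalities separately: the lower bound by an averaging (double-counting) argument applied to an arbitrary fractional domination, and the upper bound by exhibiting one explicit fractional domination, namely the uniform weighting.

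For the lower bound, I would take an arbitrary fractional domination $f$ of $G$ and sum the defining inequality $\sum_{u \in N[v]} f(u) \ge 1$ over all $v \in V(G)$, which yields $\sum_{v \in V(G)} \sum_{u \in N[v]} f(u) \ge n$. The key bookkeeping step is to interchange the order of summation: the weight $f(u)$ is counted once for each $v$ with $u \in N[v]$, and since $G$ is simple and undirected, $u \in N[v]$ is equivalent to $v \in N[u]$, so $f(u)$ occurs with multiplicity $|N[u]| = 1 + \deg(u)$. Hence the left-hand side equals $\sum_{u \in V(G)} (1 + \deg(u)) f(u)$. Bounding each coefficient by $1 + \Delta(G)$ and using $f(u) \ge 0$ gives $(1 + \Delta(G)) \sum_{u} f(u) \ge n$, so the total weight of $f$ is at least $n/(1+\Delta(G))$; as $f$ was arbitrary, $\domf(G) \ge n/(1+\Delta(G))$.

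For the upper bound, I would simply check that the constant function assigning weight $1/(1+\delta(G))$ to every vertex is a fractional domination. It takes values in $[0,1]$ because $\delta(G) \ge 0$, and for each vertex $v$ the total weight on $N[v]$ equals $|N[v]|/(1+\delta(G)) = (1+\deg(v))/(1+\delta(G)) \ge 1$ since $\deg(v) \ge \delta(G)$. Its total weight is $n/(1+\delta(G))$, which therefore bounds $\domf(G)$ from above.

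I do not expect a genuine obstacle here; the only points needing care are the correct handling of multiplicities in the double count — in particular the symmetry $u \in N[v] \iff v \in N[u]$ — and the verification that the uniform weight does not exceed $1$, for which $\delta(G) \ge 0$ suffices.
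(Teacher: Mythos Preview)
Your proof is correct and is the standard argument for this well-known bound. Note, however, that the paper does not actually supply its own proof of this lemma: it is stated with citations to Grinstead \& Slater and to Domke, Hedetniemi, \& Laskar, and marked with the ``no proof'' symbol. So there is nothing in the paper to compare your argument against; your double-counting for the lower bound and uniform weighting for the upper bound are exactly the expected proofs and almost certainly what the cited references do as well.
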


Throughout this paper,
we will implicitly assume an ordering on the vertex set of a graph.
Given such an ordering,
we can approximate the domination number using a greedy algorithm,
as follows.
Iteratively select vertices $x_1, x_2, \ldots, x_m$ so that,
for each $k = 1, 2, \ldots, m$,
vertex $x_k$ is chosen so that it dominates as many vertices of
$V(G)-N[x_1, x_2, ... , x_{k-1}]$
(that is, not-yet-dominated vertices) as possible.
Resolve ties by choosing $x_k$ as early as possible
in the ordering on $V(G)$.
Stop the iterative process
when every vertex is dominated by one of the $x_k$'s.
We refer to $x_1, x_2, \ldots, x_m$
as the \defterm{greedy dominating sequence}.
The \defterm{greedy domination number} $\domg(G) = m$
is the number of vertices in this sequence.

Determining the domination number of a general graph
is known to be NP-complete
(see Garey \& Johnson~\cite{GaJo1979});
it is natural to seek more easily computed approximations.
The values of $\domf$ and $\domg$ can be determined in polynomial time.
Further, the fact that $\dom$ lies in the interval $[\domf, \domg]$
follows easily from definitions.

\begin{observation} \label{O:dom3kinds}
For every graph $G$,
\[
\domf(G) \le \dom(G) \le \domg(G).\ggcnopf
\]
\end{observation}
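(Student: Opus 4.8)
The plan is to derive both inequalities immediately from the definitions, with essentially no computation.

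For the left-hand inequality $\domf(G) \le \dom(G)$, I would take a minimum dominating set $S$, so that $|S| = \dom(G)$, and let $f$ be its indicator function: $f(v) = 1$ for $v \in S$ and $f(v) = 0$ otherwise. Because $S$ dominates $G$, every closed neighborhood $N[v]$ meets $S$, so $\sum_{u \in N[v]} f(u) \ge 1$; thus $f$ is a fractional domination. Its total weight is $|S| = \dom(G)$, and since $\domf(G)$ is by definition the least total weight attained by any fractional domination, this yields $\domf(G) \le \dom(G)$.

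For the right-hand inequality $\dom(G) \le \domg(G)$, I would argue that the greedy dominating sequence $x_1, x_2, \ldots, x_m$ is itself a dominating set of $G$. Indeed, the iterative process halts only once $N[x_1, x_2, \ldots, x_m] = V(G)$, which is precisely the statement that $\{x_1, \ldots, x_m\}$ dominates $G$ (the tie-breaking rule and the choice of which vertex maximizes new coverage are irrelevant to this point). Hence $G$ has a dominating set of order at most $m = \domg(G)$, and since $\dom(G)$ is the minimum order of a dominating set, $\dom(G) \le \domg(G)$.

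I expect no genuine obstacle; the only point worth a sentence is that the greedy process actually terminates and thereby produces a legitimate dominating set. As long as some vertex $v$ remains undominated, selecting $v$ itself would dominate at least one new vertex (namely $v$, since $v \in N[v]$), so the maximum number of newly dominated vertices at each step is positive. The number of dominated vertices therefore strictly increases, and the process stops after at most $n$ steps with every vertex dominated, so the output is indeed a dominating set of order $m$.
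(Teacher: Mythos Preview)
Your argument is correct and is exactly the kind of direct-from-definitions reasoning the paper has in mind; the paper does not spell out a proof of this observation at all, simply noting before its statement that it ``follows easily from definitions,'' so your write-up is in full agreement with the intended approach.
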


We study the relationships of these three parameters further.

Techniques from the theory of hypergraphs
can be used to show that the ratio\break$\domg(G)/\domf(G)$ % LINE BREAK!!!
is $O(\log \Delta)$,
and thus $O(\log n)$, where $n$ is the order of $G$;
see Theorem~\ref{T:boundratio}, below.
Thus $\dom(G)$ must lie within a relatively small interval.
We produce examples showing that, asymptotically,
we can do no better.
We show that $\dom(G)/\domf(G)$ can be $\Theta(\log n)$,
and then we show that $\domg(G)/\dom(G)$ can be $\Theta(\log n)$.

Since $\domg$ is a useful upper bound on $\dom$, it is worthwhile to
consider upper bounds on $\domg$.
One such bound follows immediately from the above discussion:
\[
\domg(G) \le c \domf(G) \log n,
\]
for some constant $c$, where $n$ is the order of $G$.
Another class of bounds are those in which $\domg$
is bounded above by a constant multiple of $(n\log\delta)/\delta$.
The first of these was found by Alon \& Spencer~\cite{AlSp1992}
(see their Theorem~2.2 and the remarks following it).
A slightly improved bound was given by
Clark, Shekhtman, Suen, \& Fisher~\cite[Theorem~2]{CSSF1996};
we state this below.

\begin{theorem}[Clark, Shekhtman, Suen, \& Fisher~{\cite{CSSF1996}}]
  \label{T:logdd}
For every graph $G$ of order $n$,
\[
\domg(G)
  \le n\left[1-\prod_{i=1}^{\delta+1}\frac{i\delta}{i\delta+1}\right],
\]
where $\delta=\delta(G)$.\ggcnopf\end{theorem}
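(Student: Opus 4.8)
The plan is to track the number of still-undominated vertices through the greedy process, turn that into a recurrence, and analyze the recurrence in ``phases''; the product in the statement will drop out of a telescoping across the phases. Write $\delta=\delta(G)$, let $x_1,x_2,\dots$ be the greedy dominating sequence, and for $k\ge 0$ put $U_k=V(G)-N[x_1,\dots,x_k]$ and $u_k=|U_k|$, so that $u_0=n$ and $\domg(G)$ is the first index with $u_{\domg(G)}=0$. What I need is a good lower bound on $u_{k-1}-u_k$, the number of vertices dominated at step $k$.

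The crucial step is a sharpened averaging bound. Summing $|N[v]\cap U_{k-1}|$ over all $v\in V(G)$ counts each $w\in U_{k-1}$ exactly $|N[w]|\ge\delta+1$ times, so the total is at least $(\delta+1)u_{k-1}$; but this total really runs over fewer than $n$ vertices, since the $k-1$ distinct vertices $x_1,\dots,x_{k-1}$ contribute $0$ (every vertex of $U_{k-1}$ is, by definition, outside each $N[x_i]$). Hence some vertex of $G$ meets $U_{k-1}$ in at least $(\delta+1)u_{k-1}/(n-k+1)$ vertices; the greedy step does at least this well, and $u_{k-1}-u_k$ is an integer, so
\[
u_k\ \le\ u_{k-1}-\left\lceil\frac{(\delta+1)\,u_{k-1}}{n-k+1}\right\rceil .
\]
With $n$ in place of $n-k+1$ this recurrence would be far too weak to prove the theorem --- vacuous, in fact, for small $\delta$ --- so the denominator $n-k+1$ is essential; it is what produces a bound of order $(n\log\delta)/\delta$. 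A routine induction shows that $u_k$ is at most the $k$-th term $\widetilde u_k$ of the sequence obtained by requiring equality in the last display (with $\widetilde u_0=n$); hence $\domg(G)\le M$, where $\widetilde u_M=0$.

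To bound $M$, set $s_k=\widetilde u_{k-1}-\widetilde u_k=\lceil(\delta+1)\widetilde u_{k-1}/(n-k+1)\rceil$. Since $\widetilde u$ drops by at least $1$ per step, $\widetilde u_{k-1}\le n-k+1$, so $s_k\in\{1,\dots,\delta+1\}$; and $s_k\ge\widetilde u_{k-1}/(n-k+1)$ forces $\widetilde u_k/(n-k)\le\widetilde u_{k-1}/(n-k+1)$, hence $s_{k+1}\le s_k$. So the steps split into consecutive \emph{phases} on which $s_k$ equals $\delta+1,\delta,\dots,1$ in turn. Now the key computation. Writing $N=n-k+1$ for the ``clock'' (which drops by $1$ per step), consider, on the phase where $s_k=j$, the quantity $\widetilde u_{k-1}-(j-1)N/(\delta+1)$: it lies in $(0,\,N/(\delta+1)]$ throughout that phase and decreases by exactly $(j\delta+1)/(\delta+1)$ per step, so the phase lasts about $N/(j\delta+1)$ steps and the clock leaves it at about $N\cdot j\delta/(j\delta+1)$. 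Telescoping from the initial clock value $n$ down through $j=\delta+1,\dots,1$, the clock after the last step --- which equals $n-M$ --- is at least about $n\prod_{i=1}^{\delta+1}\frac{i\delta}{i\delta+1}$, whence $M\le n\bigl[1-\prod_{i=1}^{\delta+1}\frac{i\delta}{i\delta+1}\bigr]$.

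The main obstacle is replacing ``about'' by genuine inequalities. The ceilings only ever shorten a phase --- because $s_k\ge(\delta+1)\widetilde u_{k-1}/(n-k+1)$ drives the tracked quantity down at least as fast as the fractional rate --- but each ceiling leaves a small overshoot that is carried into the next phase, and one must verify that the overshoots accumulated over the $\delta+1$ phase boundaries never pull $n-M$ below $n\prod_{i=1}^{\delta+1}\frac{i\delta}{i\delta+1}$. (Empty phases cause no trouble, since then the clock is simply unchanged, which is more than the telescoped inequality asks for.) Making this last bookkeeping rigorous is the delicate part; everything else is the averaging observation together with a direct computation.
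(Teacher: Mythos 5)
First, note that the paper itself offers no proof of this theorem; it is quoted from Clark, Shekhtman, Suen, and Fisher, so your attempt has to stand on its own. Most of it does. The averaging step is correct and is the right idea: since $x_1,\dots,x_{k-1}$ are distinct and each has closed neighborhood disjoint from $U_{k-1}$, the sum $\sum_{v}\left|N[v]\cap U_{k-1}\right|\ge(\delta+1)u_{k-1}$ is spread over at most $n-k+1$ vertices, giving $u_k\le u_{k-1}-\bigl\lceil(\delta+1)u_{k-1}/(n-k+1)\bigr\rceil$. The comparison with the extremal sequence $\widetilde u_k$, the monotonicity of the step sizes $s_k$, the resulting phase structure, and the per-phase decrement $(j\delta+1)/(\delta+1)$ of your tracked quantity all check out, and the heuristic telescoping does land on $n\prod_{i=1}^{\delta+1}\frac{i\delta}{i\delta+1}$.

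The problem is that the step you explicitly defer --- ``making the bookkeeping rigorous'' --- is where the theorem actually lives, and the straightforward completion falls short of the stated bound. If phase $j$ occupies steps $a,\dots,b$ and $N^{(j)}=n-a+1$ is the clock on entry, then your constraints (the tracked quantity starts at most $N^{(j)}/(\delta+1)$, is still at least $1/(\delta+1)$ entering the last step, and drops by $(j\delta+1)/(\delta+1)$ per step) give only $b-a+1\le\frac{N^{(j)}-1}{j\delta+1}+1$, hence a clock on exit of at least $\frac{j\delta}{j\delta+1}\bigl(N^{(j)}-1\bigr)$. Telescoping this through the $\delta+1$ phases accumulates an additive loss of up to $\delta+1$, i.e., it proves $\domg(G)\le n\bigl[1-\prod_{i=1}^{\delta+1}\frac{i\delta}{i\delta+1}\bigr]+O(\delta)$, not the theorem. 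The missing ingredient is the exact compensation you allude to: a phase that runs ``one step long'' must exit with its tracked quantity strictly negative, which shortens the next phase, and one needs a single invariant carried across all phase boundaries to show that this cancellation is exact. Since you neither state nor verify such an invariant, this is a genuine gap. (For this paper's purposes only the order $\Theta\bigl([n\log\delta]/\delta\bigr)$ is used, and your argument does deliver that; it just does not prove the precise product bound as stated. A smaller point worth a sentence in any writeup: the comparison $u_k\le\widetilde u_k$ needs the map $u\mapsto u-\lceil(\delta+1)u/(n-k+1)\rceil$ to be nondecreasing, which requires checking $n-k+1\ge\delta+1$ at every executed step.)
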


We note that the right side of the above inequality is
$\Theta\bigl([n \log \delta]/\delta\bigr)$.
At the conclusion of this paper,
we will compare these two bounds on $\domg$,
using examples to show that sometimes one is tighter,
and sometimes the other is.

\bigskip

In the following result,
we will use a concept dual to fractional domination.
A function $f\colon V(G) \to [0,1]$
is a \defterm{fractional packing} if
for each vertex $v$,
\[
\sum_{u \in N[v]} f(u) \le 1.
\]
Note that
the maximum total weight of $V(G)$, taken over all fractional packings,
and the minimum total weight of $V(G)$,
taken over all fractional dominations,
are described by dual linear programs
(see Haynes, Hedetniemi, \& Slater~\cite[Chapter~4]{HHS1998}
or
Domke, Hedetniemi, \& Laskar~\cite[Section~3]{DHL1988}).
Thus, by the principle of strong duality,
given a fractional packing on a graph $G$,
the total weight of the vertex set is at most $\domf(G)$.

We now prove an upper bound on $\domg(G)/\domf(G)$.
This is a special case of a more general result
on vertex covers of hypergraphs
and is similar to a bound found by Johnson~\cite[Theorem~4]{Joh1974}
and by Lov\'{a}sz~\cite[Corollary~2]{Lov1975}
(see also Schrijver~\cite[Theorem~77.2]{Sch2003}).

\begin{theorem} \label{T:boundratio}
For every graph $G$,
\[
\frac{\domg(G)}{\domf(G)}
  \le 1+\ln\bigl[1+\Delta(G)\bigr].
\]
\end{theorem}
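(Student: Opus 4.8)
The plan is to bound the greedy algorithm's cost against a fractional packing, exploiting strong duality as set up in the paragraph preceding the statement. Recall the greedy sequence $x_1, x_2, \ldots, x_m$ chosen so that each $x_k$ dominates a maximum number of not-yet-dominated vertices. For $k = 1, \ldots, m$, let $a_k$ be the number of vertices newly dominated when $x_k$ is selected; then $a_1 \ge a_2 \ge \cdots \ge a_m \ge 1$ by the greedy choice (a vertex that would dominate fewer new vertices than a later pick could have been chosen instead), and $\sum_k a_k = n$ since every vertex is eventually dominated exactly once as ``new.'' The idea is to construct a fractional packing $f$ on $G$ whose total weight is at least $m / (1 + \ln[1+\Delta])$, which by strong duality gives $m \le (1 + \ln[1+\Delta]) \cdot \domf(G)$.

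The natural weighting is the charging scheme: when $x_k$ newly dominates a vertex $u$, assign to $u$ the charge $1/a_k$; this is the standard ``price'' each covered element pays in the greedy set-cover analysis. Summing all charges gives exactly $m$, since the $a_k$ vertices charged at step $k$ contribute $a_k \cdot (1/a_k) = 1$. First I would verify that a suitably scaled version of this charge function is a fractional packing, i.e., that for every vertex $v$, the sum of charges over $N[v]$ is bounded. Fix $v$ and let $d+1 = |N[v]| \le 1 + \Delta$. Order the vertices of $N[v]$ by the step at which they are newly dominated. When the $j$-th such vertex (counting from the last) is about to be newly dominated, at least $j$ vertices of $N[v]$ remain undominated, so the greedy pick at that step dominates at least $j$ new vertices — hence that vertex's charge is at most $1/j$. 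Therefore $\sum_{u \in N[v]} (\text{charge}) \le 1 + 1/2 + \cdots + 1/(d+1) = H_{d+1} \le 1 + \ln(d+1) \le 1 + \ln(1+\Delta)$.

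Consequently $f(u) := (\text{charge on }u)/(1 + \ln[1+\Delta])$ satisfies $\sum_{u \in N[v]} f(u) \le 1$ for every $v$, so $f$ is a fractional packing. Its total weight is $m / (1 + \ln[1+\Delta])$, and by strong duality this is at most $\domf(G)$, giving $\domg(G) = m \le (1 + \ln[1+\Delta]) \domf(G)$, as desired. The one place needing care — the step I expect to be the main obstacle — is the charge-bound argument for a fixed $v$: one must argue cleanly that at the moment a vertex of $N[v]$ gets newly dominated, the greedy step responsible dominates at least as many new vertices as there are still-undominated vertices in $N[v]$ at that moment (because $v$ itself, or whichever vertex of $N[v]$ is picked at that step, could dominate all of them). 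This uses the maximality in the greedy choice together with the observation that all remaining vertices of $N[v]$ lie in a single closed neighborhood. Handling the bookkeeping of ``which step newly dominates which vertex of $N[v]$'' and the off-by-one in the harmonic sum is routine once that inequality is pinned down.
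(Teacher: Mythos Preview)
Your proposal is correct and follows essentially the same approach as the paper: the paper's weight $w(v)=1/|F(v)|$ is exactly your charge $1/a_k$, and the paper proves the same harmonic bound on $\sum_{u\in N[v]} w(u)$ by the identical argument that $v$ itself could have dominated all remaining undominated vertices of $N[v]$ at the relevant step, then invokes LP duality with the rescaled packing. The only cosmetic differences are notation and that the paper does not bother to record the monotonicity $a_1\ge a_2\ge\cdots$, which indeed is not needed.
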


\begin{proof}
Set $m = \domg(G)$.
Let $x_1, x_2, \ldots, x_m$ be the greedy dominating sequence.
For each vertex $v$ of $G$, let $g(v)$ be the
first vertex in the greedy dominating sequence that
dominates $v$.
Let $F(v)$ be the set of all vertices of $G$ that are
first dominated by $g(v)$;
that is, $F(v) = N[x_k] - N[x_1, x_2, \ldots, x_{k-1}]$,
where $x_k = g(v)$.
Let $w(v) = \frac{1}{\left|F(v)\right|}$.
So $w(v)$ is the reciprocal of the number of
vertices that are dominated
in the same step of the greedy algorithm as $v$.
Note that $\sum_{u \in F(v)} w(u) = 1$,
and thus $\sum_{v \in V(G)} w(v) = m$.

Our proof is based on that of Schrijver~\cite[Theorem~77.2]{Sch2003},
and proceeds as follows.
We assign weight $w(v)$ to each vertex $v$.
We find upper bounds on the weights of vertices
lying in a closed neighborhood,
and conclude that, if each vertex $v$ is given weight
$w(v) / \left(1+\ln\bigl[1+\Delta(G)\bigr]\right)$,
then the result is a fractional packing.
Applying linear programming duality, we then obtain
a lower bound on $\domf(G)$,
from which our result follows.

Let $v$ be a vertex of $G$.
We list the elements of $N[v]$ in the order
in which they were dominated in the greedy algorithm.
Letting $p = 1+\deg(v)$,
we represent $N[v]$ as $\{u_1, u_2, \ldots, u_p\}$,
where, if $g(u_i)$ comes before $g(u_j)$
in the greedy dominating sequence,
then $i < j$.

We claim that $w(u_i) \le \frac{1}{p+1-i}$ for each $u_i$.
Suppose for a contradiction
that $\left|F(u_i)\right| < p+1-i$, for some $u_i$.
Then
$\left|F(u_i)\right| < \bigl|\{u_i, u_{i+1}, \ldots, u_p\}\bigr|$,
and so replacing $g(u_i)$ by $v$ in the greedy dominating sequence
would increase the number of vertices dominated at this step
in the greedy algorithm.
However, this contradicts the definition of greedy dominating sequence,
and so
$\left|F(u_i)\right| \ge p+1-i$.
Thus,
\[
w(u_i) = \frac{1}{\left|F(u_i)\right|} \le \frac{1}{p+1-i},
\]
as claimed.

Hence, for each vertex $v$ we have
\[
\sum_{u \in N[v]} w(u)
  \le \sum_{i=1}^{p} \frac{1}{p+1-i}
  = \sum_{i=1}^{p} \frac{1}{i}
  \le 1+ \ln p
  \le 1 + \ln\bigl[1+\Delta(G)\bigr].
\]
Dividing by $1+\ln\bigl[1+\Delta(G)\bigr]$,
we obtain
\[
\sum_{u \in N[v]} \frac{w(u)}{1+\ln\bigl[1+\Delta(G)\bigr]}
  \le 1,
\]
and so assigning weight
$w(v) / \left(1+\ln\bigl[1+\Delta(G)\bigr]\right)$
to each vertex $v$,
results in a fractional packing.
Therefore, as noted before the statement of the theorem,
the sum of all vertex weights is bounded above by $\domf(G)$.
That is,
\[
\sum_{v \in V(G)} \frac{w(v)}{1+\ln\bigl[1+\Delta(G)\bigr]}
  \le \domf(G).
\]
Multiplying by $1+\ln\bigl[1+\Delta(G)\bigr]$,
we obtain
\[
\domg(G)
  = m
  = \sum_{v \in V(G)} w(v)
  \le \left(1+\ln\bigl[1+\Delta(G)\bigr]\right) \domf(G).
\]
Dividing by $\domf(G)$ yields our result.\ggcendpf\end{proof}

Hence the following.

\begin{corollary} \label{C:domupperbds}
For any graph $G$ of order $n$ with maximum degree $\Delta \ge 2$
\[
\dom(G) \le c_1 \ln(\Delta) \domf(G)
\]
and
\[
\dom(G) \le c_2 \ln(n) \domf(G),
\]
where $c_1$ and $c_2$
are appropriately chosen constants.\ggcnopf\end{corollary}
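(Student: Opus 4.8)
The plan is to obtain both inequalities directly from Observation~\ref{O:dom3kinds} and Theorem~\ref{T:boundratio}, treating the corollary as essentially a repackaging of the bound $\domg(G)\le\bigl(1+\ln[1+\Delta(G)]\bigr)\domf(G)$ in which the additive $1$ and the ``off-by-one'' inside the logarithm are absorbed into the constants $c_1$ and $c_2$. First I would chain the two cited results to get
\[
\dom(G)\le\domg(G)\le\bigl(1+\ln[1+\Delta]\bigr)\,\domf(G),
\]
so that it suffices to bound $1+\ln[1+\Delta]$ above by a constant multiple of $\ln\Delta$ and, separately, by a constant multiple of $\ln n$.

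For the first inequality I would exploit the hypothesis $\Delta\ge 2$ twice. Since $\Delta\ge 2$ gives $\Delta^2\ge\Delta+1$ (as $\Delta^2-\Delta-1\ge 1$ there), we have $\ln[1+\Delta]\le 2\ln\Delta$; and since $\ln\Delta\ge\ln 2>0$, we have $1\le\ln\Delta/\ln 2$. Adding these gives $1+\ln[1+\Delta]\le(2+1/\ln 2)\ln\Delta$, so $c_1=2+1/\ln 2$ suffices.

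For the second inequality I would use $\Delta\le n-1$, which yields $1+\Delta\le n$ and hence $\ln[1+\Delta]\le\ln n$; moreover $\Delta\ge 2$ forces $n\ge 3>e$, so $1<\ln n$. Therefore $1+\ln[1+\Delta]\le 2\ln n$, and $c_2=2$ suffices.

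I do not anticipate any real obstacle: all the mathematical content is already in Theorem~\ref{T:boundratio}, and the only thing requiring care is that the constants can be chosen uniformly over all admissible graphs — which is precisely the role of the hypothesis $\Delta\ge 2$, since it rules out the degenerate cases $\Delta\in\{0,1\}$ in which $\ln\Delta$ is $-\infty$ or $0$ and no bound of this shape can hold.
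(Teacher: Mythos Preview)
Your proposal is correct and matches the paper's approach: the paper presents this corollary as an immediate consequence of Theorem~\ref{T:boundratio} (indeed with no proof at all, only the words ``Hence the following''), and your argument simply fills in the routine verification that $1+\ln[1+\Delta]$ is bounded by constant multiples of $\ln\Delta$ and $\ln n$ once $\Delta\ge 2$. The explicit constants you give are fine.
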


The preceding theorem and corollary
place restrictions on the value of $\dom$.
We now show that these restrictions are asymptotically best possible
up to a constant factor.
We begin with a construction of a family of graphs in which $\dom$ lies
near the high end of the interval $[\domf, \domg]$.
Later, we will obtain better results using random graphs.

\begin{example} \label{E:torusconstruct}
Given a positive integer $t$,
we construct a graph $J_t$ of order $n = (2t)^{2t-1}$
such that
\[
\domf(J_t) = e+o(1) = \Theta(1)
\]
and
\[
\dom(J_t) = 2t
  = \Theta\left(\frac{\log n}{\log \log n}\right).
\]

Let $t$ be a positive integer.
Set $d = 2t-1$ and $n = (2t)^d$.
Let $G$ be the graph $K_{2t} \ggcsetminus tK_2$,
that is, $K_{2t}$ with a perfect matching removed.
Let $J_t$ be the graph whose vertices are $d$-tuples of the form
$( x_1, x_2, \ldots, x_d )$ where each $x_i$ is a vertex in $G$.
Let vertices $( x_1, x_2, \ldots, x_d )$
and $( y_1, y_2, \ldots, y_d )$ be
adjacent in $J_t$ if for each $i$,
the vertices $x_i$ and $y_i$ are equal or adjacent in $G$.
(The way in which $J_t$ is constructed from $G$ is often called
the ``strong [direct] product''.)
We note that $J_t$ has order $n$.

We show that $J_t$ has the required properties.
For each vertex $v$ of $G$,
denote by $\overline{v}$ the unique vertex in $G$
that is not adjacent to $v$.

Note that $J_t$ is regular of degree $(2t-1)^d - 1$.
By Lemma~\ref{L:fracbounds},
\[
\domf(J_t) = \frac{n}{(2t-1)^d}
  = \frac{(d+1)^d}{d^d} = e+o(1).
\]

Let $S$ be a set of $d$ vertices of $J_t$.
We write
$S = \bigl\{\,\left(x_1^i, x_2^i, \ldots, x_d^i\right)
% The line below gets us a vertical bar that is the same size as the
% braces. Supposedly there are better ways to do this (use "\middle",
% for example), but I can't get any of them to work. Note that we do
% want "\mathrel", not "\mathbin", for proper spacing.
  \mathrel{\big|}
  i = 1, 2, \ldots, d \,\bigr\}$.
Let $u = \left(\overline{x_1^1}, \overline{x_2^2}, \ldots,
  \overline{x_d^d}\right)$.
Then $u$ is not dominated by any vertex in $S$,
so $S$ is not a dominating set.
Hence, the domination number of $J_t$ is at least $d+1$.
Now let $A$ be the set of all vertices in $J_t$
of the form $(v,v,v, \ldots , v)$
where $v$ is a vertex in $G$.
Since there are $d+1$ such vertices, but only $d$ coordinates,
every vertex of $J_t$ must be dominated by at least one vertex of $A$.
Thus, $A$ is a dominating set of size $d+1$,
and so $\dom(J) = d+1 = 2t$.\ggcendexample\end{example}

For the graph $J_t$ of Example~\ref{E:torusconstruct},
$\dom/\domf = \Theta(\log n / \log \log n)$.
Thus we have constructed an infinite family of graphs for which the
ratio $\dom/\domf$ is unbounded.
However, the ratio is not as high as we would like.
Using random graphs, we can produce better examples,
for which $\dom/\domf$ is, with high probability,
$\Theta(\log n)$.

For each natural number $n$,
let $R_n$ be a random graph on $n$ labeled vertices
with edge probability $1/2$.
Given a graphical property $P$
we say that $R_n$ \defterm{almost surely} (a.s.) has $P$
if the probability that $R_n$ has $P$ goes to one
as $n$ approaches infinity.
See Palmer~\cite{Pal1985} for an introduction to random graphs.

It is known that the domination number of $R_n$
is almost surely $\Theta(\log n)$
(see Nikoletseas \& Spirakis~\cite[Lemmas~1 \&~2]{NiSp1994}).
In fact, much stronger results are known.
Weber~\cite[Theorem~2]{Web1981}
showed that $\dom(R_n)$ is a.s.\ equal to one of
two values given by explicit formulae.
For our purposes,
it suffices that $\dom(R_n)$
is a.s.\ $\Theta(\log n)$.
On the other hand, $\domf(R_n)$ is a.s.\ $\Theta(1)$.
We give a short proof of these facts below.

\begin{theorem} \label{T:rand}
Almost surely,
\[
\domf(R_n) = 2+o(1)
\]
and
\[
\dom(R_n) = \Theta(\log n).
\]
\end{theorem}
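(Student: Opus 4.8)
The plan is to treat the two statements separately. For $\domf(R_n)=2+o(1)$ I would appeal directly to Lemma~\ref{L:fracbounds} together with the concentration of degrees in $R_n$. Since $\deg(v)$ is a sum of $n-1$ independent fair coin flips, a Chernoff/Hoeffding bound gives $\Pr\bigl(|\deg(v)-(n-1)/2|>\sqrt{n\ln n}\bigr)=O(n^{-2})$, and a union bound over the $n$ vertices shows that almost surely $\delta(R_n)=n/2+O(\sqrt{n\ln n})$ and $\Delta(R_n)=n/2+O(\sqrt{n\ln n})$. Plugging these into $n/(1+\Delta)\le\domf\le n/(1+\delta)$, both bounds collapse to $2+o(1)$, giving the first claim.

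For $\dom(R_n)=\Theta(\log n)$ the upper bound is quick. Fix the deterministic set $S$ consisting of the first $k=\lceil 2\log_2 n\rceil$ vertices. A vertex $v\notin S$ fails to be dominated by $S$ exactly when it is nonadjacent to all of $S$, an event of probability $2^{-k}$, so by a union bound $\Pr(S\text{ is not dominating})\le (n-k)2^{-k}\le n^{-1}\to 0$. Hence almost surely $S$ is dominating and $\dom(R_n)\le\lceil 2\log_2 n\rceil$. (Alternatively this follows from Theorem~\ref{T:boundratio} and Observation~\ref{O:dom3kinds}, using the estimate $\domf(R_n)=2+o(1)$ just obtained together with $1+\ln(1+\Delta)\le 1+\ln n$.)

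For the lower bound I would run a first-moment argument. Put $k=\lfloor\tfrac12\log_2 n\rfloor$. For a fixed $k$-set $S$, the events ``$v$ is dominated by $S$'' over $v\notin S$ involve pairwise disjoint sets of potential edges, hence are independent, each with probability $1-2^{-k}$; thus $\Pr(S\text{ dominates})=(1-2^{-k})^{n-k}\le\exp\bigl(-(n-k)2^{-k}\bigr)$. Summing over the $\binom nk\le n^k$ choices of $S$, and using $2^{-k}\ge n^{-1/2}$ and $n^k=e^{O((\log n)^2)}$, we get
\[
\Pr\bigl(\dom(R_n)\le k\bigr)\le n^k\exp\bigl(-(n-k)2^{-k}\bigr)\le\exp\!\Bigl(O\bigl((\log n)^2\bigr)-\tfrac12\sqrt n\Bigr)\to 0.
\]
So almost surely $\dom(R_n)>\lfloor\tfrac12\log_2 n\rfloor$, and combined with the upper bound this yields $\dom(R_n)=\Theta(\log n)$.

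The only delicate point is the lower-bound computation: the constant $c$ in $k=c\log_2 n$ must be small enough (any $c<1$ works, and $c=\tfrac12$ comfortably) that the $\binom nk$-factor in the union bound, which is merely $e^{O((\log n)^2)}$, is overwhelmed by the $e^{-\Theta(\sqrt n)}$ decay coming from $(n-k)2^{-k}$. The one structural fact worth stating explicitly, rather than merely bounding, is the independence across $v\notin S$ of the domination events, which is what makes $\Pr(S\text{ dominates})$ equal to $(1-2^{-k})^{n-k}$.
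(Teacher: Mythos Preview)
Your proposal is correct and follows essentially the same approach as the paper: degree concentration plus Lemma~\ref{L:fracbounds} for $\domf$, and a first-moment/union-bound computation for the lower bound on $\dom$. The only minor difference is that for the $O(\log n)$ upper bound the paper invokes Corollary~\ref{C:domupperbds} (which you mention as your alternative), whereas your primary argument fixes an explicit $2\log_2 n$-set; and the paper states the lower bound for $p=\lfloor(1-\varepsilon)\log_2 n\rfloor$ with arbitrary $\varepsilon\in(0,1)$ rather than the specific $c=\tfrac12$, a generality you already note is available.
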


\begin{proof}
It is known that the degrees of all vertices in $R_n$
tend to concentrate tightly around $n/2$.
In particular, a.s.
\[
\bigl[1-o(1)\bigr]\frac{n}{2}
  \le \delta(R_n)
  \le \Delta(R_n)
  \le \bigl[1+o(1)\bigr]\frac{n}{2}.
\]
This follows from a Chernoff bound~\cite[Theorem~1]{Che1952};
for a proof,
see Palmer~\cite[Theorem~5.1.4]{Pal1985}.
Applying Lemma~\ref{L:fracbounds},
we conclude that a.s.\ $\domf(R_n) = 2 + o(1)$.

Since $\domf(R_n)$ is a.s.\ $\Theta(1)$,
by Corollary~\ref{C:domupperbds}
we see that
$\dom(R_n)$ is a.s.\ $O(\log n)$.
It remains to show that $\dom(R_n)$ is a.s.\ $\Omega(\log n)$.
Fix $\varepsilon$ with $0 < \varepsilon < 1$.
Set $p=\bigl\lfloor(1-\varepsilon) \log_2 n\bigr\rfloor$.
We show that a.s.\ $\dom(R_n) > p$, which will complete our proof.

Our argument is similar to that given by
Nikoletseas \& Spirakis~\cite[Lemma~1]{NiSp1994}.
Let $S$ be a subset of $V(G)$ with order $p$.
If $v$ is a vertex not in $S$ then
the probability that $S$ dominates $v$
is $1-\left(\frac{1}{2}\right)^p$.
Hence, the probability that $S$ dominates $R_n$
is $\left[1-\left(\frac{1}{2}\right)^p\right]^{n-p}$.
Let $E$ be the expected number of $p$-sets that dominate $R_n$.
Then,
\begin{align*}
E = \binom{n}{p} \left[1-\left(\frac{1}{2}\right)^p\right]^{n-p}
  &\le n^p \left[e^{-\left(1/2\right)^p}\right]^{n-p} \\
%  &=   n^p e^{-\left(1/2\right)^p (n-p)} \\
  &\le n^p e^{-\left(1/n^{1-\varepsilon}\right)(n-p)}
    \qquad\text{since $2^p\le n^{1-\varepsilon}$} \\
%  &=   n^p e^{-\left(n/n^{1-\varepsilon}\right)+\left(p/n^{1-\varepsilon}\right)} \\
  &=   e^{p \ln(n)-\left(n/n^{1-\varepsilon}\right)} e^{p/n^{1-\varepsilon}} \\
  &\le c e^{p \ln(n) - n^\varepsilon},\\
\end{align*}
for some constant $c$.
But the last expression goes to zero
as $n$ approaches infinity.
Hence, $R_n$ a.s.\ has no dominating $p$-set.
This leads to the desired result.\ggcendpf\end{proof}

When the random graph $R_n$ almost surely has some property,
we may conclude that,
for each sufficiently large $n$,
there exists a graph of order $n$
having the property.
Hence, we obtain the following.

\begin{corollary} \label{C:domhi}
There exist graphs $G_n$,
for infinitely many integers $n$,
so that each $G_n$ has order $n$, and
\[
\frac{\dom(G_n)}{\domf(G_n)} = \Theta(\log n).\ggcnopf
\]
\end{corollary}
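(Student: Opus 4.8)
The plan is to obtain this corollary directly from Theorem~\ref{T:rand}, using the principle --- noted just above the corollary's statement --- that any property holding almost surely for $R_n$ is witnessed by an actual graph of order $n$ once $n$ is large. First I would unpack the two asymptotic claims of Theorem~\ref{T:rand} into a form to which that principle applies. The claim $\domf(R_n) = 2 + o(1)$ a.s.\ implies, in particular, that the event $A_n = \{\,1 \le \domf(R_n) \le 3\,\}$ satisfies $\Pr[A_n] \to 1$. The claim $\dom(R_n) = \Theta(\log n)$ a.s.\ means there are positive constants $c_1, c_2$ for which the event $B_n = \{\,c_1 \log n \le \dom(R_n) \le c_2 \log n\,\}$ satisfies $\Pr[B_n] \to 1$.

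Next I would combine these. Since $\Pr[A_n \cap B_n] \ge \Pr[A_n] + \Pr[B_n] - 1 \to 1$, for every sufficiently large $n$ the event $A_n \cap B_n$ occurs with positive probability, and hence is nonempty. I would then fix, for each such $n$, a graph $G_n$ of order $n$ lying in $A_n \cap B_n$. For this $G_n$ we have $1 \le \domf(G_n) \le 3$ and $c_1 \log n \le \dom(G_n) \le c_2 \log n$, so
\[
\frac{c_1}{3}\,\log n \;\le\; \frac{\dom(G_n)}{\domf(G_n)} \;\le\; c_2\,\log n,
\]
which is exactly $\dom(G_n)/\domf(G_n) = \Theta(\log n)$. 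Since we obtain such a $G_n$ for every $n$ past some threshold, this covers infinitely many integers $n$, as the statement requires.

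The hard part here is essentially nonexistent --- all the real work lives in Theorem~\ref{T:rand}. The only points to be careful about are the standard ones: an $o(1)$ or $\Theta(\cdot)$ assertion about a random graph must first be rephrased as ``a fixed event holds with probability tending to $1$'' before the probabilistic-existence principle can be invoked, and one must observe that only two such events are being intersected, so a simple Bonferroni inequality suffices and no union bound over a growing family is needed.
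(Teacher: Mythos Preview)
Your proposal is correct and follows exactly the route the paper intends: the paper gives no explicit proof of this corollary, simply invoking Theorem~\ref{T:rand} together with the remark that an almost-sure property of $R_n$ is realized by some graph of order $n$ for all sufficiently large $n$. Your write-up merely makes explicit the standard step of recasting the $o(1)$ and $\Theta(\cdot)$ statements as fixed events of probability tending to $1$ and intersecting them, which is precisely what that remark encapsulates.
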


Thus, the bounds in Corollary~\ref{C:domupperbds}
are asymptotically best possible.
We have proven this using probabilistic methods.
The best explicit construction we have been able to find
is that of the graphs $J_t$ from Example~\ref{E:torusconstruct},
for which the ratio $\dom/\domf$ is smaller:
$\Theta(\log n/\log \log n)$.
We ask whether an explicit construction can be found
for the larger ratio.

\begin{problem} \label{B:construct}
Find an explicit construction of graphs $G_n$,
for infinitely many integers $n$,
so that each $G_n$ has order $n$, and
\[
\frac{\dom(G_n)}{\domf(G_n)} = \Theta\left(\log n\right).\ggcnopf
\]
\end{problem}

We have seen that $\domg/\domf$ is $O(\log n)$,
and that the ratio $\dom/\domf$ may be $\Theta(\log n)$.
In our next example
the ratio $\domg/\dom$ is $\Theta(\log n)$.
Thus, $\dom$ is near the low end of the interval $[\domf, \domg]$,
and the greedy algorithm
approximates the domination number relatively poorly.

\begin{example} \label{E:cliqueconstruct}
Given an integer $t \ge 4$,
we construct a graph $H_t$ of order $n = 2^{t+2}$ such that
\[
\domf(H_t) = \dom(H_t) = 4
\]
and
\[
\domg(H_t) = t
  = \Theta(\log n).
\]

Let $t\ge 4$ be a natural number.
Let $u_1$, $u_2$, $u_3$, $u_4$ be vertices
and set $S = \{u_1, u_2, u_3, u_4 \}$.
To construct $H_t$,
begin with the union of $S$ and $t$ disjoint cliques:
\[
S \cup \bigl[K_4 \cup K_8 \cup K_{16} \cup
  \cdots \cup K_{2\cdot2^t}\bigr].
\]
Add additional edges so that each vertex of $S$ is adjacent
to one quarter of the vertices in each clique,
and no two vertices of $S$ have any common neighbors.
Let $H_t$ be the resulting graph.
We note that the order of $H_t$ is
\[
4 + 4 \bigl[1 + 2 + 4 + \cdots + 2^{t-1} \bigr] = 2^{t+2}.
\]

Given a fractional domination of $H_t$,
the total weight of the vertices in each $N[u_i]$
is at least 1.
Since the sets $N[u_1]$, $N[u_2]$, $N[u_3]$, $N[u_4]$ are pairwise disjoint,
we have $\domf(H_t) \ge 4$.
On the other hand, $S$ dominates $H_t$,
so $\dom(H_t) \le 4$.
Thus,
\[
4 \le \domf(H_t) \le \dom(H_t) \le 4,
\]
and we have $\domf(H_t) = \dom(H_t) = 4$.

If we approximate $\dom(H_t)$ with the greedy algorithm,
then we will never choose any vertex in $S$.
The greedy dominating sequence will contain
one vertex from each of the cliques used to construct $H_t$.
Since $t \ge 4$ the first four such vertices chosen will dominate
the four vertices in $S$, and so $\domg(H_t) = t$.\ggcendexample\end{example}

Letting $n = 2^{t+2}$,
and letting $G_n$ be $H_t$ from the above example,
we obtain the following.

\begin{corollary} \label{C:domlo}
There exist graphs $G_n$,
for infinitely many integers $n$,
so that each $G_n$ has order $n$, and
\[
\frac{\domg(G_n)}{\dom(G_n)} = \Theta(\log n).\ggcnopf
\]
\end{corollary}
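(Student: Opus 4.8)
The plan is to read the corollary off directly from Example~\ref{E:cliqueconstruct}. For each integer $t \ge 4$ put $n = 2^{t+2}$ and let $G_n = H_t$ be the graph constructed there; letting $t$ run over $4, 5, 6, \dots$ produces graphs of order $n$ for infinitely many integers $n$. The example supplies $\dom(G_n) = 4$ and $\domg(G_n) = t$, and from $n = 2^{t+2}$ we get $t = \log_2 n - 2 = \Theta(\log n)$. Hence
\[
\frac{\domg(G_n)}{\dom(G_n)} = \frac{t}{4} = \Theta(\log n),
\]
which is exactly the statement to be proved.

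Thus nothing remains beyond the verification already carried out in the example, and the step I expect to require the most care there (rather than in the corollary itself) is the behavior of the greedy algorithm on $H_t$: that, for the chosen vertex ordering, the greedy dominating sequence uses no vertex of $S$ and contains exactly one vertex from each of the $t$ cliques. The governing estimate is that a vertex in the clique of order $2^k$ covers $2^k$ or $2^k+1$ still-undominated vertices when it is examined (the whole clique, plus possibly its one neighbor in $S$), whereas any vertex $u_i \in S$ has $|N[u_i]| = 2^t$, and its count of undominated neighbors only shrinks as the algorithm runs; so greedy takes the cliques in decreasing order of size, one vertex apiece, and the ``one quarter of each clique per $u_i$'' structure together with the tie-breaking rule can be arranged so that the first four choices already dominate all of $S$. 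The remaining equalities $\domf(H_t) = \dom(H_t) = 4$ then come from the pairwise disjointness of $N[u_1], N[u_2], N[u_3], N[u_4]$ (for the lower bound) and from $S$ being a dominating set (for the upper bound). With the example in hand, the corollary is immediate.
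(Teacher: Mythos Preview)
Your proposal is correct and follows exactly the paper's approach: set $n=2^{t+2}$, take $G_n=H_t$ from Example~\ref{E:cliqueconstruct}, and use the values $\dom(H_t)=4$ and $\domg(H_t)=t=\Theta(\log n)$ established there. Your additional discussion of why greedy never selects a vertex of $S$ and why the first four greedy choices cover $S$ is accurate and in fact more detailed than the paper's own treatment of the example.
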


We now consider upper bounds on $\domg$.
By Theorem~\ref{T:boundratio} we have, for a graph $G$ of order $n$,
\begin{equation} \label{Eq:ratiobound}
\domg(G) \le c_1 \domf(G) \log n,
\end{equation}
for some constant $c_1$.
And by Theorem~\ref{T:logdd}, we have
\begin{equation} \label{Eq:clarkbound}
\domg(G) \le c_2\frac{n\log\delta(G)}{\delta(G)},
\end{equation}
for some constant $c_2$.

Consider these bounds
for the graph $H_t$ from Example~\ref{E:cliqueconstruct}.
We have $\domf(H_t) = 4$, and clearly $\delta(H_t) = 4$.
Thus, letting $n$ be the order of $H_t$,
the right-hand side of~(\ref{Eq:ratiobound}) is $\Theta(\log n)$,
while the right-hand side of~(\ref{Eq:clarkbound}) is $\Theta(n)$,
making~(\ref{Eq:ratiobound}) by far the tighter bound.

On the other hand, let $t$ be a positive integer,
and let $G$ be a $t$-clique with a pendant vertex joined to each
clique vertex
(a ``hairy clique'').
Letting $n$ be the order of $G$,
we have $\domf(G) = t = n/2$, and $\delta(G) = 1$.
Thus,
the right-hand side of~(\ref{Eq:ratiobound}) is $\Theta(n \log n)$,
while the right-hand side of~(\ref{Eq:clarkbound}) is $\Theta(n)$,
making~(\ref{Eq:clarkbound}) the tighter bound.

\end{document}